\newcommand{\C}{\mathcal C}
\newcommand{\Id}{\mathrm{Id}}
\newcommand{\id}{\mathrm{id}}
\newcommand{\IT}{\mathbb T}
\newcommand{\Comp}{\mathbf{Comp}}
\newcommand{\Tych}{\mathbf{Tych}}
\newcommand{\Top}{\mathbf{Top}}
\newcommand{\Mor}{\mathrm{Mor}}
\newcommand{\U}{\mathcal U}
\newtheorem{theorem}{Theorem}[section]
\newtheorem{claim}[theorem]{Claim}
\newtheorem{proposition}[theorem]{Proposition}
\newtheorem{corollary}[theorem]{Corollary}
\newtheorem{lemma}[theorem]{Lemma}
\theoremstyle{definition}
\newtheorem{definition}[theorem]{Definition}
\newtheorem{problem}[theorem]{Problem}
\title{Extending binary operations to funtor-spaces}
\author{Taras Banakh and Volodymyr Gavrylkiv}
\address{Ivan Franko National University of Lviv, Ukraine}
\address{Vasyl Stefanyk Precarpathian National University, Ivano-Frankivsk, Ukraine}
\email{t.o.banakh@gmail.com}
\email{vgavrylkiv@yahoo.com} 
\subjclass{18B30; 18B40; 20N02; 20M50; 22A22; 54B30; 54H10}
\keywords{Functor, monad, algebra, binary operation, semigroup, right-topological semigroup, topological center}
\begin{document}
\begin{abstract}Given a continuous monadic functor $T:\Comp\to\Comp$ in the category of compacta and a discrete topological semigroup $X$ we extend the semigroup operation $\varphi:X\times X\to X$ to a right-topological semigroup operation $\Phi:T\beta X\times T\beta X\to T\beta X$ whose topological center $\Lambda_\Phi$ contains the dense subsemigroup $T_f X$ consisting of elements $a\in T\beta X$ that have finite support in $X$.
\end{abstract}

\maketitle
One of powerful tools in the modern Combinatorics of Numbers is the method of ultrafilters based on the fact that each binary operation $\varphi:X\times X\to X$ defined on a discrete topological space $X$ can be extended to a right-topological operation $\Phi:\beta X\times \beta X\to\beta X$ on the Stone-\v Cech compactification $\beta X$ of $X$, see \cite{HS}, \cite{P}. The extension of $\varphi$ is constructed in two step. First, for every $x\in X$ extend the left shift $\varphi_x:X\to X$, $\varphi_x:y\mapsto \varphi(x,y)$, to a continuous map $\overline{\varphi}_x:\beta X\to\beta X$. Next, for every $b\in\beta X$, extend the right shift $\bar \varphi^b:X\to\beta X$, $\bar\varphi^b:x\mapsto\bar\varphi_x(b)$, to a continuous map $\Phi^b:\beta X\to\beta X$ and put $\Phi(a,b)=\Phi^b(a)$ for every $a\in\beta X$. The Stone-\v Cech extension $\beta X$ is the space of ultrafilters on $X$. In \cite{G2} it was observed that the binary operation $\varphi$ extends not only to $\beta X$ but also to the superextension $\lambda X$ of $X$ and to the space $GX$ of all inclusion hyperspaces on $X$. If $X$ is a semigroup, then $GX$ is a compact Hausdorff right-topological semigroup containing $\lambda X$ and $\beta X$ as closed subsemigroups. 

In this note we show that an (associative) binary operation $\varphi:X\times X\to X$ on a discrete topological space $X$ can be extended to an (associative) right-topological operation $\Phi:T\beta X\times T\beta X\to T\beta X$ for any monadic functor $T$ in the category $\Comp$ of compact Hausdorff spaces. So, for the functors $\beta,\lambda$ or $G$, we get the extensions of the operation $\varphi$ discussed above.

\section{Monadic functors and their algebras}

Let us recall \cite[VI]{McL}, \cite[\S1.2]{TZ} that a functor $T:\C\to\C$ in a category $\C$ is called {\em monadic} if there are natural transformations $\eta:\Id\to T$ and $\mu:T^2\to T$  making the following diagrams commutative:
$$
 \xymatrix{T \ar[r]^{\eta T} \ar[d]_{T\eta} \ar[rd]^{1_{T}}&
T^2 \ar[d]^{\mu} & T^3 \ar[r]^{\mu T} \ar[d]_{T\mu} & T^2
\ar[d]^{\mu}\\
T^2 \ar[r]_{\mu} & T & T^2 \ar[r]_{\mu} & T}
$$
In this case the triple $\IT=(T,\eta,\mu)$ is called a {\em monad}, the natural transformations $\eta:\Id\to T$ and $\mu:T^2\to T$ are called the {\em unit} and {\em multiplication} of the monad $\IT$, and the functor $T$ is the {\em functorial part} of the monad $\IT$.

A pair $(X,\xi)$ consisting of an object $X$ and a morphism $\xi:TX\to X$ of the category $\C$ is called a {\em $\IT$-algebra} if $\xi\circ\eta_X=\id_X$ and the square $$\xymatrix{T^2X\ar[d]_\mu\ar[r]^{T\xi}&TX\ar[d]^\xi\\TX\ar[r]_\xi&X}$$ is commutative. For every object $X$ of the category $\C$ the pair $(TX,\mu)$ is a $\IT$-algebra called the {\em free $\IT$-algebra over $X$}. 

For two $\IT$-algebras $(X,\xi_X)$ and $(Y,\xi_Y)$ a morphism $h:X\to Y$ is called a {\em morphism of $\IT$-algebras} if the following diagram is commutative:
$$\xymatrix{TX\ar[d]_{\xi_X}\ar[r]^{Th}&TY\ar[d]^{\xi_Y}\\ X\ar[r]_h&Y}$$
The naturality of the multiplication $\mu:T^2\to T$ of the monad $\IT$ implies that for any morphism $f:X\to Y$ in $\C$ the morphism $Tf:TX\to TY$ is a morphism of  free $\IT$-algebras.  

Each morphism $h:TX\to Y$ from the free $\IT$-algebra into a $\IT$-algebra $(Y,\xi)$ is uniquely determined by the composition $h\circ\eta$.

\begin{lemma}\label{free} If $h:TX\to Y$ is a morphism of a free $\IT$-algebra $TX$ into a $\IT$-algebra $(Y,\xi)$, then $h=\mu\circ T(h\circ \eta)=\mu\circ Th\circ T\eta$. 
\end{lemma}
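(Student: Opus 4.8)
The plan is to recover $h$ from the single composite $h\circ\eta$ using only two ingredients: the defining commuting square of a morphism of $\IT$-algebras, and the unit axiom of the monad $\IT$. First I would set $g=h\circ\eta_X\colon X\to Y$, the datum that is supposed to determine $h$. Since $h$ is a morphism \emph{from the free $\IT$-algebra} $(TX,\mu)$ into $(Y,\xi)$, I would apply the algebra-morphism square with source algebra $(TX,\mu)$; the point to notice is that the left vertical arrow of that square is then the multiplication $\mu$ itself (the structure map of the free algebra), not a structure map of $X$. This yields the identity $h\circ\mu=\xi\circ Th$. Note also that the outer morphism in the asserted formula must be the structure map $\xi\colon TY\to Y$ of the target algebra, since that is the only arrow at hand producing an element of $Y$.

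The second ingredient is the left triangle among the monad diagrams, read at the object $X$: it states that $\mu\circ T\eta=\id_{TX}$, that is $\mu_X\circ T\eta_X=\id_{TX}$. Splicing the two identities together I expect the whole statement to fall out of the chain
$$h=h\circ\id_{TX}=h\circ\mu\circ T\eta=\xi\circ Th\circ T\eta=\xi\circ T(h\circ\eta),$$
where the third equality is the algebra-morphism identity $h\circ\mu=\xi\circ Th$ and the final equality is just functoriality of $T$, namely $Th\circ T\eta=T(h\circ\eta)$. This simultaneously gives both displayed forms of the formula.

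I do not expect a serious obstacle here; the lemma is a bookkeeping consequence of the monad and algebra axioms, and the whole content is the displayed chain above. The only two places asking for a little care are: (i) applying the morphism square to the free algebra $(TX,\mu)$, so that the relevant source structure map is the multiplication $\mu$ rather than anything attached to $X$; and (ii) invoking the correct half of the unit law, namely $\mu\circ T\eta=\id$ (the $T\eta$ leg of the left triangle), rather than its companion $\mu\circ\eta T=\id$. With these two observations the computation is immediate, and as a corollary $h$ is completely determined by its restriction $h\circ\eta$, which is precisely the uniqueness remark recorded just before the lemma.
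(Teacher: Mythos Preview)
Your proof is correct and follows essentially the same strategy as the paper: combine the monad unit law with the $\IT$-algebra morphism square $h\circ\mu=\xi\circ Th$. You also correctly observe that the outer map in the formula must be the structure map $\xi$ of the target algebra (the paper's own proof in fact concludes with $h=\xi\circ T(h\circ\eta)$, so the ``$\mu$'' in the displayed statement is a typo for $\xi$).

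The only minor difference is which half of the unit axiom is invoked first: the paper begins with $\mu\circ\eta_T=\id_{TX}$ and writes the chain
\[
h=h\circ\mu\circ\eta_T=\xi\circ Th\circ\eta_T=\xi\circ Th\circ T\eta\circ\mu\circ\eta_T=\xi\circ T(h\circ\eta),
\]
whereas you use $\mu\circ T\eta=\id_{TX}$ directly, giving the shorter chain $h=h\circ\mu\circ T\eta=\xi\circ Th\circ T\eta=\xi\circ T(h\circ\eta)$. Your version avoids the paper's somewhat opaque third equality (which silently passes from $\eta_T$ to $T\eta$) and is the cleaner of the two; but the underlying idea is identical.
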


\begin{proof} Consider the commutative diagram $$\xymatrix{
X\ar[d]_{\eta}\ar[r]^{\eta}&TX\ar@<2pt>[d]^{\eta_{T}}\ar[r]^h&Y\\
TX\ar@/_2pc/[rr]_{T(h\circ\eta)}\ar@<2pt>[r]^{T\eta}&T^2X\ar@<2pt>[l]^{\mu}
\ar@<2pt>[u]^{\mu}\ar[r]^{Th}&TY\ar[u]_{\xi}
}
$$and observe that
$$h=h\circ \mu\circ\eta_{T}=\xi\circ Th\circ\eta_T=\xi\circ Th\circ T\eta\circ\mu\circ\eta_T=\xi\circ T(h\circ\eta).$$
\end{proof}


By a {\em topological category} we shall understand a subcategory of the category $\Top$ of topological spaces and their continuous maps such that
\begin{itemize}
\item for any objects $X,Y$ of the category $\C$ each constant map $f:X\to Y$ is a morphism of $\C$;
\item for any objects $X,Y$ of the category $\C$ the product $X\times Y$ is an object of $\C$ and for any object $Z$ of $\C$ and morphisms $f_X:Z\to X$ and $f_Y:Z\to Y$ the map $(f_X,f_Y):Z\to X\times Y$ is a morphism of the category $\C$.\end{itemize}   

A discrete topological space $X$ is called {\em discrete in $\C$} if $X$ is an object of $\C$ and each function $f:X\to Y$ into an object $Y$ of the category $\C$ is a morphism of $\C$.
It is clear that any bijection $f:X\to Y$ between discrete objects of the category $\C$ is an isomorphism in $\C$.
 
From now on we shall assume $(\IT,\eta,\mu)$ is a monad in a topological category $\C$ such that for any discrete objects $X,Y$ in $\C$ the product $X\times Y$ is discrete in $\C$.

\section{Binary operations and their $\IT$-extensions}

By a {\em binary operation in the category $\C$} we understand any function $\varphi:X\times Y\to Z$ where $X,Y,Z$ are objects of the category $\C$. 
For any $a\in X$ and $b\in Y$ the functions $$\varphi_a:Y\to Z,\;\;\varphi_a:y\mapsto\varphi(a,y)$$and
$$\varphi^b:X\to Z,\;\;\varphi^b:x\mapsto\varphi(x,b),$$are called the {\em left} and {\em right} {\em shifts}, respectively.

A binary operation $\varphi:X\times Y\to Z$ is called {\em right-topological} if for every $y\in Y$ the right shift $\varphi^y:X\to Z$, $\varphi^y:x\mapsto\varphi(x,y)$, is continuous. The {\em topological center} of a right-topological binary operation $\varphi:X\times Y\to Z$ is the set $\Lambda_\varphi$ of all elements $x\in X$ such that the left shift $\varphi_x:Y\to Z$ is continuous.

\begin{definition}\label{Text} Let $\varphi:X\times Y\to Z$ be a binary operation in the category $\C$. A binary operation $\Phi:TX\times TY\to TZ$ is defined to be a {\em $\IT$-extension} of $\varphi$ if
\begin{enumerate}
\item $\Phi(\eta_X(x),\eta_Y(y))=\eta_Z(\varphi(x,y))$ for any $x\in X$ and $y\in Y$;
\item for every $b\in TY$ the right shift $\Phi^b:TX\to TZ$, $\Phi^b:x\mapsto\Phi(x,b)$, is a morphism of the free $\IT$-algebras $TY$, $TZ$;
\item for every $x\in X$ the left shift $\Phi_{\eta(x)}:TY\to TZ$, $\Phi_{\eta(x)}:y\mapsto \Phi(\eta(x),y)$, is a morphism of the free $\IT$-algebras $TX$, $TZ$.
\end{enumerate}
\end{definition}

This definition implies that for any binary operation $\varphi:X\times Y\to Z$ its $\IT$-extension $\Phi:TX\times TY\to TZ$ is a right-topological binary operation whose topological center $\Lambda_\Phi$ contains the set $\eta(X)\subset TX$.

\begin{theorem}\label{unique} Let $\varphi:X\times Y\to Z$ be a binary operation in the category $\C$.
\begin{enumerate} 
\item The binary operation $\varphi$ has at most one $\IT$-extension $\Phi:TX\times TY\to TZ$.
\item If $X,Y$ are discrete in $\C$, then $\varphi$ has a unique $\IT$-extension $\Phi:TX\times TY\to TZ$.
\end{enumerate}
\end{theorem}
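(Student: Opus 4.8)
The plan is to prove uniqueness and existence separately, exploiting the rigidity built into the definition of a $\IT$-extension via Lemma~\ref{free}.

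For uniqueness (item~1), suppose $\Phi$ is any $\IT$-extension of $\varphi$. The key observation is that each of the three defining conditions forces $\Phi$ to be reconstructible from $\varphi$ alone. First I would fix $b\in TY$ and analyze the right shift $\Phi^b:TX\to TZ$. By condition~(2) this is a morphism of free $\IT$-algebras, so by Lemma~\ref{free} it is completely determined by its composition $\Phi^b\circ\eta_X:X\to TZ$, namely $\Phi^b=\mu\circ T(\Phi^b\circ\eta_X)$. Thus it suffices to show that $\Phi^b\circ\eta_X$, i.e.\ the map $x\mapsto\Phi(\eta_X(x),b)$, is determined by $\varphi$. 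But $x\mapsto\Phi(\eta_X(x),b)=\Phi_{\eta(x)}(b)$, and by condition~(3) the left shift $\Phi_{\eta(x)}:TY\to TZ$ is again a morphism of free $\IT$-algebras, hence by Lemma~\ref{free} determined by $\Phi_{\eta(x)}\circ\eta_Y:y\mapsto\Phi(\eta_X(x),\eta_Y(y))$, which by condition~(1) equals $\eta_Z(\varphi(x,y))=\eta_Z\circ\varphi_x$. Chaining these two applications of Lemma~\ref{free} expresses $\Phi$ entirely in terms of $\varphi$, $\eta$ and $\mu$, so at most one $\IT$-extension can exist.

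For existence (item~2) under the discreteness hypothesis, the plan is to run the above reconstruction in reverse, turning the forced formulas into a definition. Since $X,Y$ are discrete in $\C$, every function out of $X$ or $Y$ into an object of $\C$ is automatically a morphism, which is exactly what lets the constructions below land in the category. I would proceed in two stages, mirroring the classical Stone--\v{C}ech construction recalled in the introduction. For each $x\in X$, define $\Phi_{\eta(x)}:TY\to TZ$ as the unique morphism of free $\IT$-algebras extending the function $\eta_Z\circ\varphi_x:Y\to TZ$; concretely set $\Phi_{\eta(x)}:=\mu\circ T(\eta_Z\circ\varphi_x)=\mu\circ T\eta_Z\circ T\varphi_x$. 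This is well defined because $\varphi_x:Y\to Z$ is a morphism ($Y$ being discrete). Next, for each fixed $b\in TY$, consider the function $X\to TZ$, $x\mapsto\Phi_{\eta(x)}(b)$; since $X$ is discrete this is a morphism, so I define $\Phi^b:TX\to TZ$ to be its unique free-algebra extension, $\Phi^b:=\mu\circ T\bigl(x\mapsto\Phi_{\eta(x)}(b)\bigr)$, and finally put $\Phi(a,b):=\Phi^b(a)$ for all $a\in TX$, $b\in TY$.

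It then remains to verify that this $\Phi$ satisfies the three conditions of Definition~\ref{Text}. Condition~(2) holds by construction, since each $\Phi^b$ is defined as a free-algebra morphism. Conditions~(1) and~(3) require checking that the two-stage definition is consistent, i.e.\ that $\Phi(\eta_X(x),b)=\Phi_{\eta(x)}(b)$ so that the left shift through $\eta(x)$ agrees with the morphism used to build it; this follows from the unit law $\xi\circ\eta_X=\id$ for the free algebra $(TX,\mu)$, which gives $\Phi^b\circ\eta_X=x\mapsto\Phi_{\eta(x)}(b)$ and hence makes $y\mapsto\Phi(\eta_X(x),y)$ coincide with the morphism $\Phi_{\eta(x)}$. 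Condition~(1) then drops out by specializing to $b=\eta_Y(y)$ and again applying the unit law. The step I expect to be the main obstacle is verifying continuity and morphism-hood of the intermediate map $x\mapsto\Phi_{\eta(x)}(b)$ uniformly in $b$, and confirming that the two successive applications of $\mu$ and $T$ assemble into a genuinely well-defined operation on $TX\times TY$ rather than merely a family of shifts; here the discreteness hypothesis on $X$, $Y$ (and the assumption that products of discrete objects are discrete, ensuring $\varphi$ itself is a morphism) is exactly what is needed to keep every auxiliary map inside $\C$.
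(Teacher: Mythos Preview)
Your proposal is correct and follows essentially the same route as the paper: uniqueness via two applications of Lemma~\ref{free} (first on the left shifts $\Phi_{\eta(x)}$, then on the right shifts $\Phi^b$), and existence by the two-stage construction extending first in $Y$ then in $X$. The only cosmetic difference is that the paper writes the first-stage extension directly as $T\varphi_x$, whereas you write it as $\mu\circ T\eta_Z\circ T\varphi_x$; these coincide by the monad unit law $\mu\circ T\eta=\id$, so your formula simplifies to the paper's, and your worry in the last paragraph about ``uniformly in $b$'' is unfounded since the construction is carried out pointwise in $b$ (discreteness of $X$ alone handles the morphism-hood of $x\mapsto T\varphi_x(b)$).
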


\begin{proof} 1. Let $\Phi,\Psi:TX\times TY\to TZ$ be two $\IT$-extensions of the operation $\varphi$. By the condition (3) of Definition~\ref{Text}, for every $x\in X$ and $a=\eta_X(x)\in TX$, the left shifts $\Phi_a,\Psi_a:TY\to TZ$ are morphisms of the free $\IT$-algebras.  

By the condition (1) of Definition~\ref{Text}, $$\Phi_a\circ \eta_Y=\eta_Z\circ\varphi_x=\Psi_a\circ\eta_Y.$$ Then Lemma~\ref{free} implies that $$\Phi_a=\mu\circ T(\Phi_a\circ\eta_X)=\mu\circ T(\eta_Z\circ\varphi_x)=\mu\circ T(\Psi_a\circ\eta_X)=\Psi_a.$$

The equality $\Phi=\Psi$ will follow as soon as we check that $\Phi^b=\Psi^b$ for every $b\in TY$. Since $\Phi^b,\Psi^b:TX\to TZ$ are morphisms of the free $\IT$-algebras $TX$ and $TZ$, the equality $\Phi^b=\Psi^b$ follows from 
the equality $$\Phi^b\circ\eta(x)=\Phi_{\eta(x)}(b)=\Psi_{\eta(x)}(b)=\Psi^b\circ\eta(x),\;\;x\in X$$according to Lemma~\ref{free}.
\smallskip

2. Now assuming  that the spaces $X,Y$ are discrete in $\C$, we show that the binary operation $\varphi:X\times Y\to Z$ has a $\IT$-extension. For every $x\in X$ consider the left shift $\varphi_x:Y\to Z$. Since  $Y$ is discrete in $\C$, the function $\varphi_x$ is a morphism of the category $\C$. Applying the functor $T$ to this morphism, we get a morphism $T\varphi_x:TY\to TZ$. Now for every $b\in TY$ consider the function $\varphi^b:X\to TZ$, $\varphi^b:x\mapsto T\varphi_x(b)$. Since the object $X$ is discrete, the function $\varphi^b$ is a morphism of the category $\C$. Applying to this morphism the functor $T$, we get a morphism $T\varphi^b:TX\to T^2Z$. Composing this morphism with the multiplication $\mu:T^2Z\to TZ$ of the monad $\IT$, we get the function $\Phi^b=\mu\circ T\varphi^b:TZ\to TZ$. Define a binary operation $\Phi:TX\times TY\to TZ$ letting $\Phi(a,b)=\Phi^b(a)$ for $a\in TX$. 

\begin{claim}\label{cl1}$\Phi(\eta(x),b)=T\varphi_x(b)$ for every $x\in X$ and $b\in TY$.
\end{claim}

\begin{proof} The commutativity of the diagram
$$\xymatrix{
X\ar[d]_{\eta}\ar[r]^{\varphi^b}&TZ\ar[d]_{\eta}\\
TX\ar[r]_{T\varphi^b}\ar[ur]^{\Phi^b\!\!\!}&T^2Z\ar@/_1pc/[u]_{\mu}
}
$$implies the desired equality $$\Phi(\eta(x),b)=\mu\circ T\varphi^b(\eta(x))=\varphi^b(x)=T\varphi_x(b).$$
\end{proof}

Now we shall prove that $\Phi$ is a $\IT$-extension of $\varphi$.
\smallskip

i) For every $x\in X$ and $y\in Y$ we need to prove the equality
$$\Phi(\eta_X(x),\eta_Y(y))=\eta_Z\circ\varphi(x,y).$$ 
By Claim~\ref{cl1},
$$\Phi(\eta_X(x),\eta_Y(y))=T\varphi_x\circ \eta_Y(y)=\eta_Z\circ\varphi_x(y)=\eta_Z\circ \varphi(x,y).$$
The latter equality follows from the naturality of the transformation $\eta:\Id\to T$.
\smallskip

ii) The definition of $\Phi$ implies that for every $b\in TY$ the right shift $\Phi^b=\mu_Z\circ T\varphi^b$ is a morphism of free $\IT$-algebras, being the compositions of two morphisms $T\varphi^b:TX\to T^2Z$ and $\mu_Z:T^2Z\to TZ$ of free $\IT$-algebras.
\smallskip

iii) Claim~\ref{cl1} guarantees that for every $x\in X$ the left shift $\Phi_{\eta(x)}=T\varphi_x:TY\to TZ$ is a morphism of the free $\IT$-algebras.
\end{proof}

\begin{proposition} Let $\varphi:X\times Y\to Z$, $\psi:X'\times Y'\to Z'$ be  two binary operations in $\C$, $\Phi:TX\times TY\to TZ$, $\Psi:TX'\times TY'\to TZ'$ be their $\IT$-extensions, and $h_X:X\to X'$, $h_Y:Y\to Y'$, $h_Z:Z\to Z'$ be morphisms in $\C$. If $\psi(h_X\times h_Y)=h_Z\circ \phi$, then $T\Psi(Th_X\times Th_Y)=Th_Z\circ \Phi$.
\end{proposition}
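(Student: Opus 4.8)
The plan is to deduce the identity from the uniqueness of $\IT$-extensions established in Theorem~\ref{unique}(1). Writing $\chi:=h_Z\circ\varphi=\psi\circ(h_X\times h_Y):X\times Y\to Z'$ for the common composite (the two descriptions agree by hypothesis), I would show that \emph{both} maps $Th_Z\circ\Phi$ and $\Psi\circ(Th_X\times Th_Y)$ from $TX\times TY$ to $TZ'$ are $\IT$-extensions of $\chi$. Since Theorem~\ref{unique}(1) grants that $\chi$ has at most one $\IT$-extension, the two maps must coincide, which is exactly the desired equality $\Psi\circ(Th_X\times Th_Y)=Th_Z\circ\Phi$.

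To see that $Th_Z\circ\Phi$ is a $\IT$-extension of $\chi$, I would verify the three conditions of Definition~\ref{Text}. Condition (1) reduces, via the $\IT$-extension property of $\Phi$ and the naturality square $Th_Z\circ\eta_Z=\eta_{Z'}\circ h_Z$, to the identity $Th_Z(\eta_Z(\varphi(x,y)))=\eta_{Z'}(\chi(x,y))$. For conditions (2) and (3) the crucial observation is that the right and left shifts of $Th_Z\circ\Phi$ factor as $Th_Z\circ\Phi^b$ and $Th_Z\circ\Phi_{\eta(x)}$; each factor is a morphism of free $\IT$-algebras — namely $\Phi^b$ and $\Phi_{\eta(x)}$ by the $\IT$-extension property of $\Phi$, and $Th_Z=Th_Z$ because applying $T$ to any morphism of $\C$ yields a morphism of free $\IT$-algebras (as recalled before Lemma~\ref{free}) — so the composites are morphisms as well.

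The verification that $\Psi\circ(Th_X\times Th_Y)$ is a $\IT$-extension of $\chi$ is analogous, but the shifts must be re-indexed correctly. Here condition (1) follows by pushing $Th_X,Th_Y$ through $\eta$ (naturality) to land at $\Psi(\eta_{X'}(h_X(x)),\eta_{Y'}(h_Y(y)))$ and then applying the extension property of $\Psi$. The point requiring care is that the right shift at $b\in TY$ equals $\Psi^{Th_Y(b)}\circ Th_X$, while the left shift at $\eta_X(x)$ equals $\Psi_{\eta_{X'}(h_X(x))}\circ Th_Y$, again using $Th_X(\eta_X(x))=\eta_{X'}(h_X(x))$. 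In each case the relevant shift of $\Psi$ is a morphism of free $\IT$-algebras by its extension property, and $Th_X,Th_Y$ are morphisms of free algebras, so the composites are morphisms.

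I expect the main obstacle to be precisely this re-indexing of the shifts for $\Psi\circ(Th_X\times Th_Y)$: one must track that fixing the second coordinate $b$ before applying $Th_Y$ turns the right shift of the composite into the right shift of $\Psi$ at the \emph{transformed} point $Th_Y(b)$, and symmetrically for the left shift, where the naturality of $\eta$ is needed to identify $Th_X(\eta_X(x))$ with $\eta_{X'}(h_X(x))$. Once these factorizations are in place, everything else is the routine bookkeeping that compositions of morphisms of free $\IT$-algebras are again such morphisms, and the conclusion is immediate from Theorem~\ref{unique}(1).
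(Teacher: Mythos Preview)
Your proof is correct and takes a different, cleaner route than the paper. The paper argues directly: for fixed $b\in TY$ it identifies $\Phi_{\eta(x)}=T\varphi_x$ and $\Psi_{\eta(x')}=T\psi_{x'}$ via Lemma~\ref{free} and the naturality square for $h_Y,h_Z$, obtains $Th_Z\circ\Phi^b\circ\eta=\Psi^{b'}\circ\eta\circ h_X$ on the nose, and then applies Lemma~\ref{free} once more to strip the $\eta$ and conclude $Th_Z\circ\Phi^b=\Psi^{b'}\circ Th_X$. Your approach instead packages both sides as $\IT$-extensions of the common operation $\chi=h_Z\circ\varphi=\psi\circ(h_X\times h_Y)$ and invokes the uniqueness clause of Theorem~\ref{unique}(1). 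This is more conceptual: it isolates the content of the proposition as ``$\IT$-extension is natural because it is unique,'' and it never needs to write down $T\varphi_x$ explicitly (which the paper's argument tacitly requires $\varphi_x$ to be a $\C$-morphism for). The paper's direct computation, on the other hand, makes the mechanism visible and does not rely on having already proved the uniqueness theorem. Since Theorem~\ref{unique}(1) is itself an immediate consequence of Lemma~\ref{free}, the two arguments ultimately rest on the same lemma; yours just applies it at a higher level of abstraction.
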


\begin{proof} Observe that for any $x\in X$ and $x'=h_X(x)$, the commutativity of the diagrams
$$\xymatrix{
Y\ar[r]^{\varphi_x}\ar[d]_{h_Y}&Z\ar[d]^{h_Z}\\
Y'\ar[r]_{\psi_{x'}}&Z'
}\quad\quad\quad
\xymatrix{
TY\ar[r]^{T\varphi_x}\ar[d]_{Th_Y}&TZ\ar[d]^{Th_Z}\\
TY'\ar[r]_{T\psi_{x'}}&TZ'
}
$$
imply that $Th_Z\circ T\varphi_x(b)=T\psi_{x'}(b')$
for every $b\in TY$ and $b'=Th_Y(b)\in TY'$.

It follows from Lemma~\ref{cl1} that $\Phi_{\eta(x)}=T\varphi_x:TY\to TZ$ and $\Psi_{\eta(x')}=T\psi_{x'}:TY'\to TZ'$. Consequently, $$Th_Z\circ \Phi^b(\eta(x))=Th_Z\circ \Phi_{\eta(x)}(b)=Th_Z\circ T\varphi_x(b)=T\psi_{x'}(b')=\Psi_{\eta(x')}(b')=\Psi^{b'}(\eta(x'))$$and hence $$Th_Z\circ\Phi^b\circ\eta=\Psi^{b'}\circ \eta\circ h_X.$$  Applying the functor $T$ to this equality, we get $$T^2h_Z\circ T(\Phi^b\circ\eta)=T(\Psi^{b'}\circ\eta)\circ Th_X.$$ Since $\Phi^b:TX\to TZ$ and $\Psi^{b'}:TX'\to TZ'$ are homomorphisms of the free $\IT$-algebras, we can apply Lemma~\ref{free} and conclude that $\Phi^b=\mu\circ T(\Phi^b\circ\eta)$ and hence
$$Th_Z\circ\Phi^b=Th_Z\circ\mu_Z\circ T(\Phi^b\circ\eta)=\mu_{Z'}\circ T^2h_Z\circ T(\Phi^b\circ\eta)=\mu_{Z'}\circ T(\Psi^{b'}\circ\eta)\circ Th_X=\Psi^{b'}\circ Th_X.$$
Then for every $a\in TX$ we get
$$Th_Z\circ\Phi(a,b)=Th_Z\circ\Phi^b(a)=\Psi^{b'}\circ Th_X(a)=\Psi(Th_X(a),Th_Y(b)).$$
\end{proof}

\section{Binary operations and tensor products}

In this section we shall discuss the relation of $\IT$-extensions to tensor products. The tensor product is a function $\otimes:TX\times TY\to T(X\times Y)$ defined for any objects $X,Y\in\C$ such that $X$ is discrete in $\C$.

For every $x\in X$ consider the embedding $i_x:Y\to X\times Y$, $i_x:y\mapsto (x,y)$. The embedding $i_x$ is a morphism of the category $\C$ because the constant map $c_x:Y\to\{x\}\subset X$ and the identity map $\id:Y\to Y$ are morphisms of the category and $\C$ contains products of its objects. Applying the functor $T$ to the morphism 
$i_x$, we get a morphism $Ti_x:TY\to T(X\times Y)$ of the category $\C$. Next, for every $b\in TY$ consider the function $Ti^b:X\to T(X\times Y)$, $Ti^b:x\mapsto Ti_x(b)$. Since $X$ is  discrete in $\C$, the function $Ti^b$ is a morphism of the category $\C$. Applying the functor $T$ to this morphism, we get a morphism $TTi^b:TX\to T^2(X\times Y)$. Composing this morphism with the multiplication $\mu:T^2(X\times Y)\to T(X\times Y)$ of the monad $\IT$, we get the morphism $\otimes^b=\mu\circ TTi^b:TX\to T(X\times Y)$. Finally define the tensor product $\otimes:TX\times TY\to T(X\times Y)$ letting $a\otimes b=\otimes^b(a)$ for $a\in TX$.

The following proposition describes some basic properties of the tensor product. 
For monadic functors in the category $\mathbf{Comp}$ of compact Hausdorff spaces those properties were established in \cite[3.4.2]{TZ}.

\begin{proposition}\label{tensor} 
\begin{enumerate}
\item The diagram $\xymatrix@1{X\times Y\ar@/^1pc/[rr]^{\eta}\ar[r]_-{\eta\times\eta}&TX\times TY\ar[r]_-\otimes &T(X\times Y)}$ is commutative for any discrete object $X$ and any object $Y$ of $\C$;
\item the tensor product is natural in the sense that for any morphisms $h_X:X\to X'$, $h_Y:Y\to Y'$ of $\C$ with discrete $X,Y$, the following diagram is commutative:
$$\xymatrix{
TX\times TY\ar[d]_{Th_X\times Th_Y}\ar[r]^{\otimes}&T(X\times Y)\ar[d]^{T(h_X\times h_Y)}\\
TX'\times TY'\ar[r]^{\otimes}& T(X'\times Y')}
$$
\item the tensor product is associative in the sense that for any discrete objects $X,Y,Z$ of $\C$  the diagram 
$$\xymatrix{TX\times TY\times TZ\ar[d]_{\id\times\otimes}\ar[r]^{\otimes\times \id}&T(X\times Y)\times TZ\ar[d]^{\otimes}\\
TX\times T(Y\times Z)\ar[r]_{\otimes}&T(X\times Y\times Z)}
$$is commutative, which means that $(a\otimes b)\otimes c=a\otimes(b\otimes c)$ for any $a\in TX$, $b\in TY$, $c\in TZ$.
\end{enumerate}
\end{proposition}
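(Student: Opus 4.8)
The plan is to prove the three items in order, reducing everything to the behaviour of $\otimes$ on the images of the units by means of Lemma~\ref{free}. The common starting point is the exact analogue of Claim~\ref{cl1}: for a discrete object $X$, every $x\in X$ and every $b\in TY$ one has $\eta_X(x)\otimes b=Ti_x(b)$. This is proved verbatim as Claim~\ref{cl1}, from the commutative square relating $\eta$, $Ti^b$, $TTi^b$ and $\mu$ together with the triangle identity $\mu\circ\eta_T=\id$; I would record it first, since it is used throughout. Item (1) then combines this claim with the naturality of $\eta$: we get $\eta_X(x)\otimes\eta_Y(y)=Ti_x\circ\eta_Y(y)$, and naturality of $\eta:\Id\to T$ applied to the morphism $i_x:Y\to X\times Y$ gives $Ti_x\circ\eta_Y=\eta_{X\times Y}\circ i_x$, so the right-hand side is $\eta_{X\times Y}(x,y)$, which is precisely commutativity of the triangle.

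For item (2), fix $b\in TY$ and set $b'=Th_Y(b)$. Both $T(h_X\times h_Y)\circ\otimes^b$ and $\otimes^{b'}\circ Th_X$ are morphisms of the free $\IT$-algebras $TX\to T(X'\times Y')$ (the first because $\otimes^b$ and $T(h_X\times h_Y)$ are morphisms of free algebras, the second by the very construction of $\otimes^{b'}$), so by Lemma~\ref{free} it suffices to check that they agree after precomposition with $\eta_X$. Evaluating on $\eta_X(x)$ and writing $x'=h_X(x)$, the preliminary claim turns both sides into terms $Ti_{(-)}$, and the point-level identity $(h_X\times h_Y)\circ i_x=i_{x'}\circ h_Y$ yields $T(h_X\times h_Y)\circ Ti_x=Ti_{x'}\circ Th_Y$ upon applying $T$. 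This gives equality on $\eta_X(X)$, hence everywhere.

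The real work is item (3), where I would reduce $(a\otimes b)\otimes c=a\otimes(b\otimes c)$ to the case $a=\eta_X(x)$, $b=\eta_Y(y)$ by two successive applications of Lemma~\ref{free}. First, with $b,c$ fixed, both $a\mapsto(a\otimes b)\otimes c=\otimes^c\circ\otimes^b(a)$ and $a\mapsto a\otimes(b\otimes c)=\otimes^{b\otimes c}(a)$ are morphisms of free algebras in $a$ (a composition of such, respectively an $\otimes^{(-)}$), so they coincide once they agree on $\eta_X(x)$. Second, with $x,c$ fixed, the preliminary claim rewrites the tensor products against $\eta_X(x)$ as $Ti_x$, exhibiting $b\mapsto(\eta_X(x)\otimes b)\otimes c=\otimes^c\circ Ti_x(b)$ and $b\mapsto\eta_X(x)\otimes(b\otimes c)=Ti_x\circ\otimes^c(b)$ as morphisms of free algebras in $b$; hence it remains to treat $b=\eta_Y(y)$. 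In the resulting base case item (1) collapses $\eta_X(x)\otimes\eta_Y(y)$ to $\eta_{X\times Y}(x,y)$, and the preliminary claim applied to the two remaining tensor products against $c$ reduces the whole identity to $Ti_{(x,y)}=T(i_x\circ i_y)$.

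This last equality follows from functoriality of $T$ together with the evident point-level identity $i_{(x,y)}=i_x\circ i_y$ of embeddings of $Z$, once $(X\times Y)\times Z$ and $X\times(Y\times Z)$ are identified with $X\times Y\times Z$; indeed both send $z$ to $(x,y,z)$. I expect the main obstacle in (3) to be purely organizational rather than conceptual: at each of the two reductions one must check that the maps involved really are morphisms of free $\IT$-algebras so that Lemma~\ref{free} is applicable, and one must keep careful track of the associativity isomorphism of products so that $i_{(x,y)}$ and $i_x\circ i_y$ are genuinely identified as the same morphism into $X\times Y\times Z$.
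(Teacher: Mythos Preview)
Your argument is correct and, for items (1) and (2), essentially identical to the paper's: both first establish the ``preliminary claim'' $\eta_X(x)\otimes b=Ti_x(b)$ (the paper derives it inside the proof of (1) and then invokes it again in (2)), and then use Lemma~\ref{free} together with naturality of $\eta$ exactly as you do.

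For item (3) there is a genuine difference, though not a mathematical one: the paper gives no proof at all and simply says that ``the proof of the associativity of the tensor product can be obtained by literal rewriting the proof of Proposition~3.4.2(4) of \cite{TZ}.'' Your two-step reduction via Lemma~\ref{free}---first in $a$, then in $b$, landing on the identity $i_{(x,y)}=i_x\circ i_y$ of embeddings $Z\to X\times Y\times Z$---is precisely the kind of argument that reference contains, and it has the advantage of being self-contained within the present paper. The checks you flag (that each map involved is a morphism of free $\IT$-algebras, and that the associativity isomorphism of products is handled) are the only points requiring care, and they go through just as you describe.
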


\begin{proof} 1. Fix any $y\in Y$ and consider the element $b=\eta_Y(y)\in TY$.  The definition of the right shift $\otimes^b$ implies that the following diagram is commutative:
$$\xymatrix{
X\ar[d]_{\eta}\ar[r]^-{Ti^b}&T(X\times Y)\\
TX\ar[r]_-{TTi^b}\ar[ru]^{\otimes^b}&T^2(X\times Y)\ar[u]_{\mu}}
$$
Consequently, for every $x\in X$ we get $$\eta(x)\otimes \eta(y)=\otimes^b\circ\eta(x)=Ti^b\circ\eta(x)=Ti_x(\eta(y))=\eta(i_x(y))=\eta(x,y).$$ The latter equality follows from the diagram
$$\xymatrix{Y\ar[d]_\eta\ar[r]^-{i_x}&X\times Y\ar[d]^{\eta}\\
TY\ar[r]_-{Ti_x}&T(X\times Y)}
$$whose commutativity follows from the naturality of the transformation $\eta:\Id\to T$. 
\smallskip

2. Let $h_X:X\to X'$ and $h_Y:Y\to Y'$ be any functions between discrete objects of the category $\C$. Let $Z=X\times Y$, $Z'=X'\times Y'$ and $h_Z=h_X\times h_Y:Z\to Z'$. Given any point $b\in TY$, consider the element $b'=Th_Y(b)\in TY'$. The statement (2) will follow as soon as we check that $Th_Z\circ\otimes^b=\otimes^{b'}\circ Th_X$.
By Lemma~\ref{free}, this equality will follow as soon as we check that $Th_Z\circ\otimes^b\circ\eta_X=\otimes^{b'}\circ Th_X\circ \eta_X=\otimes^{b'}\circ \eta_{X'}\circ h_X$. The last equality follows from the naturality of the transformation $\eta:\Id\to T$. As we know from the proof of the preceding item, 
$\otimes^{b'}\circ \eta_{X'}(x')=Ti_{x'}(b')$ for any $x'\in X'$. For every $x\in X$ and $x'=h_X(x)$ we can apply the functor $T$ to the commutative diagram
$$\xymatrix{
Y\ar[d]_{h_Y}\ar[r]^{i_x}&Z\ar[d]^{h_Z}\\
Y'\ar[r]_{i_{x'}}&Z'}
$$and obtain the equality $Th_Z\circ Ti_x=Ti_{x'}\circ Th_Y$ which implies the desired equality:
$$\otimes^{b'}\circ\eta_{X'}\circ h_X(x)=\otimes^{b'}\circ\eta_{X'}(x')=Ti_{x'}(b')=Th_Z\circ Ti_x(b)=Th_Z\circ\otimes^b\circ \eta(x).$$

3. The proof of the associativity of the tensor product can be obtained by literal rewriting the proof of Proposition 3.4.2(4) of \cite{TZ}.
\end{proof}

\begin{theorem}\label{ext-tensor} Let  $\varphi:X\times Y\to Z$ be a binary operation in the category $\C$ and $\Phi:TX\times TY\to TZ$ be its $\IT$-extension. If  $X$ is a discrete object in $\C$, then $\Phi(a,b)=T\varphi(a\otimes b)$ for any elements $a\in TX$ and $b\in TY$.
\end{theorem}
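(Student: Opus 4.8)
The plan is to invoke the uniqueness part of Theorem~\ref{unique}: since $\varphi$ has at most one $\IT$-extension, it suffices to show that the binary operation $\Psi:TX\times TY\to TZ$ defined by $\Psi(a,b)=T\varphi(a\otimes b)$ satisfies the three conditions of Definition~\ref{Text}. The hypothesis that $X$ is discrete is exactly what is needed for the tensor product $\otimes:TX\times TY\to T(X\times Y)$ to be defined, while no discreteness of $Y$ is required because the uniqueness statement of Theorem~\ref{unique}(1) holds for arbitrary objects. Granting that $\Psi$ is a $\IT$-extension, the equality $\Phi=\Psi$, that is $\Phi(a,b)=T\varphi(a\otimes b)$, follows at once.

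To verify condition (1), I would first apply Proposition~\ref{tensor}(1) to obtain $\eta_X(x)\otimes\eta_Y(y)=\eta_{X\times Y}(x,y)$, and then use the naturality of $\eta:\Id\to T$ applied to $\varphi$, which gives $T\varphi\circ\eta_{X\times Y}=\eta_Z\circ\varphi$. Combining these yields $\Psi(\eta_X(x),\eta_Y(y))=T\varphi(\eta_{X\times Y}(x,y))=\eta_Z(\varphi(x,y))$, as required.

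For condition (2) the right shift factors as $\Psi^b=T\varphi\circ\otimes^b$, where $\otimes^b:TX\to T(X\times Y)$ is the right shift of the tensor product. By its very construction $\otimes^b=\mu\circ TTi^b$ is a morphism of free $\IT$-algebras, and $T\varphi$ is a morphism of free $\IT$-algebras because $\varphi$ is a morphism of $\C$; hence the composition $\Psi^b$ is such a morphism. The step I expect to be the crux is condition (3). Here I would first extract from the proof of Proposition~\ref{tensor}(1) the identity $\eta_X(x)\otimes b=Ti_x(b)$, valid for \emph{every} $b\in TY$: the computation $\otimes^b\circ\eta_X=\mu\circ TTi^b\circ\eta_X=\mu\circ\eta_T\circ Ti^b=Ti^b$ uses only the naturality of $\eta$ and the monad law $\mu\circ\eta_T=\id$, never the special form of $b$. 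This identifies the left shift $\otimes_{\eta(x)}$ with $Ti_x:TY\to T(X\times Y)$. Since $\varphi\circ i_x=\varphi_x$, functoriality then gives $\Psi_{\eta(x)}=T\varphi\circ Ti_x=T(\varphi\circ i_x)=T\varphi_x$, which is a morphism of free $\IT$-algebras.

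Having checked all three conditions, $\Psi$ is a $\IT$-extension of $\varphi$, so Theorem~\ref{unique}(1) forces $\Psi=\Phi$ and completes the proof.
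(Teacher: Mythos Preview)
Your proposal is correct and follows essentially the same route as the paper: define $\Psi(a,b)=T\varphi(a\otimes b)$, verify the three conditions of Definition~\ref{Text} using Proposition~\ref{tensor}(1), the explicit factorization $\otimes^b=\mu\circ TTi^b$, and the identity $\eta_X(x)\otimes b=Ti_x(b)$, and then invoke the uniqueness part of Theorem~\ref{unique}. Your treatment of condition~(3) is in fact slightly more transparent than the paper's, since you isolate the equality $\otimes^b\circ\eta_X=Ti^b$ and then read off $\Psi_{\eta(x)}=T\varphi\circ Ti_x=T\varphi_x$ directly.
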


\begin{proof} Our assumptions on the category $\C$ guarantee that the product  $X\times Y$ is a discrete object of $\C$ and hence $\varphi:X\times Y\to Z$ is a morphism of the category $\C$. So, it is legal to consider the morphism $T\varphi:T(X\times Y)\to TZ$. We claim that the binary operation $$\Psi:TX\times TY\to TZ,\;\;\Psi:(a,b)=T\varphi(a\otimes b),$$
is a $\IT$-extension of $\varphi$.
\smallskip

1. The first item of Definition~\ref{Text} follows Proposition~\ref{tensor}(1) and the naturality of the transformation $\eta:\Id\to T$:
$$\Psi(\eta_X(x),\eta_Y(y))=T\varphi(\eta_X(x)\otimes \eta_Y(y))=T\varphi\circ\eta_{X\times Y}(x,y)=\eta_Z\circ\varphi(x,y).$$
\smallskip

2. For every $b\in TY$ the morphism $$\Psi^b=T\varphi\circ\otimes^b=T\varphi\circ \mu\circ TTi^b$$ is a morphism of the free $\IT$-algebras $TX$ and $TZ$.
\smallskip

3. For every $x\in X$ we see that $$\Psi_{\eta(x)}(b)=T\varphi(\otimes^b(\eta(x)))=T\varphi\circ\mu\circ TTi^b\circ\eta(x)=T\varphi\circ\mu\circ \eta\circ Ti^b(x)=T\varphi\circ Ti^b(x)$$
is a morphism of the free $\IT$-algebras $TY$ and $TZ$.
\smallskip

Thus $\Psi$ is a $\IT$-extension of the binary operation $\varphi$. By the Uniqueness Theorem~\ref{unique}(1), $\Psi$ coincides with $\Phi$ and hence $\Phi(a,b)=\Psi(a,b)=T\varphi(a\otimes b)$.
\end{proof}

\section{The topological center of $\IT$-extended operation}

Definition~\ref{Text} guarantees that for a binary operation $\varphi:X\times Y\to Z$  in $\C$ any $\IT$-extension $\Phi:TX\times TY\to TZ$ of $\varphi$ is a right-topological operation whose topological center $\Lambda_\varphi$ contains the subset $\eta_X(X)$. In this section we shall find conditions on the functor $T$ and the space $X$ guaranteeing that the topological center $\Lambda_\Phi$ is dense in $TX$.

We shall say that the functor $T$ is {\em continuous} if for each compact Hausdorff space $K$ that belongs to the category $\C$ and any object $Z$ of $\C$ the map $T:\Mor(K,Z)\to\Mor(TK,TZ)$, $T:f\mapsto Tf$, is continuous with respect to the compact-open topology on the spaces of morphisms (which are continuous maps). 

\begin{theorem}\label{cont} Let $\varphi:X\times Y\to Z$ be a binary operation in $\C$ and $\Phi:X\times Y\to Z$ be its $\IT$-extension. If the object $X$ is finite and discrete in $\C$, $TX$ is locally compact and Hausdorff, and the functor $T$ is continuous, then the operation $\Phi$ is continuous. 
\end{theorem}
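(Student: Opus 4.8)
The plan is to factor the continuity of $\Phi$ through the compact-open topology on morphism spaces, so that the two hypotheses --- continuity of the functor $T$ and local compactness of $TX$ --- can each be applied at the appropriate place. Recall from the construction in the proof of Theorem~\ref{unique}(2) that $\Phi(a,b)=\Phi^b(a)=\mu_Z\circ T\varphi^b(a)$, where $\varphi^b:X\to TZ$ is the morphism $\varphi^b:x\mapsto T\varphi_x(b)$. Since $\mu_Z:T^2Z\to TZ$ is continuous, it will suffice to prove that the map $(a,b)\mapsto T\varphi^b(a)$ from $TX\times TY$ into $T^2Z$ is continuous.

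First I would show that the assignment $b\mapsto\varphi^b$ is a continuous map from $TY$ into the space $\Mor(X,TZ)$ endowed with the compact-open topology. Because $X$ is finite and discrete, every function $X\to TZ$ is a morphism and the compact-open topology on $\Mor(X,TZ)$ coincides with the product topology on $(TZ)^X$; thus $b\mapsto\varphi^b$ is continuous precisely because each of its finitely many coordinate maps $b\mapsto T\varphi_x(b)$ is continuous, being the morphism $T\varphi_x:TY\to TZ$.

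Next I would invoke the continuity of the functor $T$. Since $X$ is finite and discrete, it is a compact Hausdorff object of $\C$, so the map $T:\Mor(X,TZ)\to\Mor(TX,T^2Z)$, $f\mapsto Tf$, is continuous for the compact-open topologies. Composing with the map of the previous paragraph shows that $b\mapsto T\varphi^b$ is a continuous map from $TY$ into $\Mor(TX,T^2Z)$. At this point the assumption that $TX$ is locally compact and Hausdorff enters through the classical fact that the evaluation map $\mathrm{ev}:\Mor(TX,T^2Z)\times TX\to T^2Z$, $(g,a)\mapsto g(a)$, is continuous whenever the domain $TX$ is locally compact Hausdorff. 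Combining these, the map $(a,b)\mapsto T\varphi^b(a)=\mathrm{ev}(T\varphi^b,a)$ is continuous, and composing with $\mu_Z$ yields the continuity of $\Phi$.

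The hard part is conceptual rather than computational: one must recognise that joint continuity of $\Phi$ cannot be read off directly, but splits cleanly into an ``outer'' application of $T$ --- governed by the continuity of the functor --- and an evaluation step --- governed by the local compactness of $TX$. Both hypotheses are essential: finiteness of $X$ guarantees that $X$ is a compact object to which functor-continuity applies and that the relevant morphism space carries the product topology, while local compactness of $TX$ is exactly what makes the evaluation map continuous. The only routine verifications left are the identification of the compact-open topology on $\Mor(X,TZ)$ with the product topology for finite discrete $X$, and the standard evaluation-continuity lemma.
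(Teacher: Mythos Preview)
Your argument is essentially the paper's own: pass from $TY$ to $\Mor(X,TZ)$ using finiteness of $X$, push through $T$ using continuity of the functor, and then uncurry using local compactness of $TX$ (the paper cites \cite[3.4.8]{En} for this last step, which is the same evaluation-continuity fact you invoke). So the strategy is correct and matches the paper.

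There is one small slip worth noting. You write ``Recall from the construction in the proof of Theorem~\ref{unique}(2) that $\Phi(a,b)=\mu_Z\circ T\varphi^b(a)$ with $\varphi^b(x)=T\varphi_x(b)$'', but that construction requires \emph{both} $X$ and $Y$ to be discrete, whereas the present theorem assumes only $X$ discrete; in particular $T\varphi_x$ need not be defined. The paper handles this by working instead with the map $\Phi_\eta^b:x\mapsto\Phi(\eta(x),b)$, whose continuity in $b$ for each fixed $x$ comes from condition~(3) of Definition~\ref{Text}; it then builds $\Psi=\mu\circ T\Phi_\eta^{(\cdot)}$ and identifies $\Psi$ with $\Phi$ via the Uniqueness Theorem~\ref{unique}(1). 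Your proof is easily repaired the same way: replace $T\varphi_x$ by the left shift $\Phi_{\eta(x)}$ throughout, and obtain the formula $\Phi^b=\mu_Z\circ T(\Phi^b\circ\eta_X)$ directly from Lemma~\ref{free} (since $\Phi^b$ is a morphism of free $\IT$-algebras by condition~(2) of Definition~\ref{Text}). With that adjustment the argument goes through verbatim.
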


\begin{proof} Since the space $X$ is discrete, the condition (2) of Definition~\ref{Text} implies that the map $\Phi_\eta:X\times TY\to TZ$, $\Phi_\eta:(x,b)\mapsto \Phi(\eta(x),b)$ is continuous. Since $X$ is finite, the induced map $$\Phi_\eta^{(\cdot)}:TY\to \Mor(X,TZ),\;\;\Phi_\eta^{(\cdot)}:b\mapsto\Phi_\eta^b\mbox{ \ where \ } \Phi_\eta^b:x\mapsto \Phi(\eta(x),b),$$ is continuous. By the continuity of the functor $T$, the map $T:\Mor(X,TZ)\to \Mor(TX,T^2Z)$, $T:f\mapsto Tf$, is continuous and so is the  composition $T\circ\Phi_\eta^{(\cdot)}:TY\to \Mor(TX,T^2Z)$. Since $TX$ is locally compact and Hausdorff, we can apply \cite[3.4.8]{En} and conclude that the map $$T\Phi^{(\cdot)}_\eta:TX\times TY\to T^2Z,\;\;T\Phi_{\eta}^{(\cdot)}:(a,b)\mapsto T\Phi^b_\eta(a)$$ is continuous and so is the composition $\Psi=\mu\circ T\Phi_\eta^{(\cdot)}:TX\times TY\to TZ$. Using the Uniqueness Theorem~\ref{unique}(1), we can prove that $\Psi=\Phi$ and hence the binary operation $\Phi$ is continuous.
\end{proof}

Let $X$ be an object of the category $\C$. We say that an element $a\in FX$ has {\em discrete (finite) support} if there is a morphism $f:D\to X$ from a  discrete (and finite) object $D$ of the category $\C$ such that $a\in Ff(FD)$. By $T_d X$  (resp. $T_f X$) we denote the set of all elements $a\in TX$ that have discrete (finite) support. It is clear that $T_f X\subset T_d X\subset TX$.

\begin{theorem}\label{top-cent} Let $\varphi:X\times Y\to Z$ be a binary operation and $\Phi:TX\times TY\to TZ$ be a $\IT$-extension of $\varphi$. If the functor $T$ is continuous, and for every finite discrete object $D$ of $\C$ the space $TD$ is locally compact and Hausdorff, then the topological center $\Lambda_\Phi$ of the binary operation $\Phi$ contains the subspace $T_f X$ of $TX$. If $T_f X$ is dense in $TX$, then the topological center $\Lambda_\Phi$ of $\Phi$ is dense in $TX$.
\end{theorem}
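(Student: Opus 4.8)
The plan is to reduce the claim about membership in the topological center to the continuity criterion already established in Theorem~\ref{cont}. By definition, an element $a\in T_fX$ has finite support, so there is a morphism $f:D\to X$ from a finite discrete object $D$ of $\C$ together with an element $a_0\in TD$ such that $a=Tf(a_0)$. To show $a\in\Lambda_\Phi$, I must show that the left shift $\Phi_a:TY\to TZ$, $b\mapsto\Phi(a,b)$, is continuous. The guiding idea is that $a$ is the image under $Tf$ of an element of $TD$, where $D$ is finite and discrete, and the operation $\varphi$ together with $f$ induces a binary operation on the smaller space $D$ whose $\IT$-extension is continuous on the $D$-side because Theorem~\ref{cont} applies verbatim (its hypotheses---$D$ finite and discrete, $TD$ locally compact Hausdorff, $T$ continuous---are exactly what we are assuming here).

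First I would set up the auxiliary binary operation. Consider $\psi=\varphi\circ(f\times\id_Y):D\times Y\to Z$, a binary operation in $\C$, and let $\Psi:TD\times TY\to TZ$ be its $\IT$-extension, which exists and is unique by Theorem~\ref{unique}(2) since $D$ is discrete (I would also need $Y$ discrete for part (2); if $Y$ is not assumed discrete, I would instead appeal to uniqueness in part (1) after exhibiting a candidate, but the cleanest route uses the naturality Proposition applied to the triple of morphisms $f:D\to X$, $\id_Y:Y\to Y$, $\id_Z:Z\to Z$). The naturality Proposition gives, from the identity $\varphi\circ(f\times\id_Y)=\id_Z\circ\psi$, the relation $Tf$ intertwines the two extensions on the appropriate factor; concretely I would extract that for every $b\in TY$,
$$\Phi(Tf(a_0),b)=\Phi^b\bigl(Tf(a_0)\bigr)=\Psi^b(a_0)=\Psi(a_0,b).$$
Thus $\Phi_a=\Phi_{Tf(a_0)}=\Psi_{a_0}$ as maps $TY\to TZ$.

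Next I would invoke Theorem~\ref{cont} for the operation $\psi:D\times Y\to Z$ and its $\IT$-extension $\Psi$. Since $D$ is finite and discrete in $\C$, $TD$ is locally compact and Hausdorff, and $T$ is continuous, Theorem~\ref{cont} tells us that $\Psi:TD\times TY\to TZ$ is jointly continuous. In particular each left shift $\Psi_{a_0}:TY\to TZ$ is continuous. Combining this with the identification $\Phi_a=\Psi_{a_0}$ from the previous step, I conclude that $\Phi_a$ is continuous, i.e. $a\in\Lambda_\Phi$. Since $a\in T_fX$ was arbitrary, $T_fX\subset\Lambda_\Phi$. The final sentence is then immediate: the topological center $\Lambda_\Phi$ is always a superset of $T_fX$, so if $T_fX$ is dense in $TX$ then $\Lambda_\Phi$, containing a dense set, is itself dense.

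The main obstacle I anticipate is verifying cleanly that $\Phi_{Tf(a_0)}=\Psi_{a_0}$, that is, that the $\IT$-extension behaves correctly under restricting along the support morphism $f:D\to X$. This is exactly the content the naturality Proposition is meant to supply, but one must be careful that it is applied with $f$ on the first variable and identities on the other two, and that the hypothesis $\psi\circ(f\times\id)=\id\circ\varphi\circ(f\times\id)$ is the trivial tautology making the diagram commute; the delicate point is matching indices $\Phi^b$ versus $\Psi^b$ and checking the equality of left shifts rather than merely of the operations, which is where Lemma~\ref{free} and the uniqueness in Theorem~\ref{unique} do the real bookkeeping.
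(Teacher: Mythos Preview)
Your proposal is correct and follows essentially the same route as the paper: reduce to a binary operation $\psi:D\times Y\to Z$ with finite discrete first factor, apply Theorem~\ref{cont} to its $\IT$-extension $\Psi$, and identify $\Phi_a=\Psi_{a_0}$. The only difference is a matter of packaging. The paper \emph{defines} $\Psi$ by the formula $\Psi(a',b)=\Phi(Tf(a'),b)$ and then checks directly that this $\Psi$ satisfies the three conditions of Definition~\ref{Text}; with this definition the identity $\Phi_a=\Psi_{a_0}$ is a tautology, and the existence issue you flagged (Theorem~\ref{unique}(2) needing $Y$ discrete) never arises. Your alternative of ``exhibiting a candidate and appealing to uniqueness'' is exactly this move; the detour through the naturality Proposition is unnecessary, since that Proposition already presupposes that $\Psi$ is a $\IT$-extension.
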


\begin{proof} We need to prove that for every $a\in T_f X$ the left shift $\Phi_a:TY\to TZ$, $\Phi_a:b\mapsto \Phi(a,b)$, is continuous. Since $a\in T_f X$, there is a finite discrete object $D$ of the category $\C$ and a morphism $f:D\to X$ such that $a\in Ff(FD)$. Fix an element $d\in FD$ such that $a=Ff(d)$.

Consider the binary operations $$\psi:D\times Y\to Z,\;\psi:(x,y)\mapsto \phi(f(x),y),$$ and $$\Psi:TD\times TY\to TZ,\;\Psi:(a,b)\mapsto \Phi(Ff(a),b).$$
It can be shown that $\Psi$ is a $\IT$-extension of $\psi$. 

By Theorem~\ref{cont}, the binary operation $\Psi$ is continuous. Consequently, the left shift $\Psi_d:TY\to TZ$, $\Psi_d:b\mapsto\Psi(d,b)$, is continuous. Since $\Psi_d=\Phi_a$, the left shift $\Phi_a$ is continuous too and hence $a\in \Lambda_\Phi$.
\end{proof}

\section{The associativity of $\IT$-extensions}

In this section we investigate the associativity of the $\IT$-extensions. We recall that a binary operation $\varphi:X\times X\to X$ is {\em associative} if $\varphi(\varphi(x,y),z)=\varphi(x,\varphi(y,z))$ for any $x,y,z\in X$. In this case we say that $X$ is a {\em semigroup}.

 A subset $A$ of a set $X$ endowed with a binary operation $\varphi:X\times X\to X$ is called a {\em subsemigroup} of $X$ if $\varphi(A\times A)\subset A$ and $\varphi(\varphi(x,y),z)=\varphi(x,\varphi(y,z))$ for all $x,y,z\in A$.

 \begin{lemma}\label{asl} Let $\varphi:X\times X\to X$ be an associative operation in $\C$ and $\Phi:TX\times TX\to TX$ be its $\IT$-extension.
\begin{enumerate}
\item for any morphisms $f_A:A\to X$, $f_B:B\to X$ from discrete objects $A,B$ in $\C$, the map $\varphi_{AB}=\varphi(f_A\times f_B):A\times B\to X$ is a morphism of $\C$ such that $\Phi(Tf_A(a),Tf_B(b))=T\varphi_{AB}(a\otimes b)$ for all $a\in TA$ and $b\in TB$;
\item $\Phi(T_d X\times T_d X)\subset T_d X$ and $\Phi(T_f X\times T_f X)\subset T_f X$;
\item $\Phi((a,b),c)=\Phi(a,\Phi(b,c))$ for any $a,b,c\in T_d X$.
\end{enumerate}
\end{lemma}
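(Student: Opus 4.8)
The plan is to prove (1) first, because it rewrites the abstract operation $\Phi$ as a concrete tensor-product formula on elements with discrete support, after which (2) and (3) follow quickly. For (1), note that the product $A\times B$ of the discrete objects $A,B$ is again discrete in $\C$, so every function out of it is a morphism; in particular $\varphi_{AB}=\varphi\circ(f_A\times f_B):A\times B\to X$ is a morphism of $\C$, and by Theorem~\ref{unique}(2) it has a unique $\IT$-extension $\Phi_{AB}:TA\times TB\to TX$. The identity $\varphi\circ(f_A\times f_B)=\id_X\circ\varphi_{AB}$ is exactly the compatibility hypothesis of the naturality Proposition for the morphisms $f_A,f_B,\id_X$, so that proposition gives $\Phi\circ(Tf_A\times Tf_B)=T\id_X\circ\Phi_{AB}=\Phi_{AB}$, i.e. $\Phi(Tf_A(a),Tf_B(b))=\Phi_{AB}(a,b)$ for all $a\in TA$ and $b\in TB$. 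Since $A$ is discrete, Theorem~\ref{ext-tensor} applies to $\varphi_{AB}$ and yields $\Phi_{AB}(a,b)=T\varphi_{AB}(a\otimes b)$; combining the two equalities proves (1).

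Statement (2) is then immediate. Given $a,b\in T_dX$, choose discrete objects $A,B$, morphisms $f_A:A\to X$, $f_B:B\to X$ and elements $a'\in TA$, $b'\in TB$ with $a=Tf_A(a')$ and $b=Tf_B(b')$. By (1), $\Phi(a,b)=T\varphi_{AB}(a'\otimes b')$ lies in $T\varphi_{AB}(T(A\times B))$, and since $A\times B$ is discrete this exhibits $\Phi(a,b)$ as an element of discrete support; hence $\Phi(a,b)\in T_dX$. If moreover $A,B$ are finite then so is $A\times B$, which gives $\Phi(T_fX\times T_fX)\subset T_fX$.

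For the associativity (3) I would represent $a,b,c\in T_dX$ as $a=Tf_A(a')$, $b=Tf_B(b')$, $c=Tf_C(c')$ over discrete objects $A,B,C$ and apply (1) twice on each side. On the left, (1) gives $\Phi(a,b)=T\varphi_{AB}(a'\otimes b')$, an element of discrete support via the morphism $\varphi_{AB}:A\times B\to X$ represented by $a'\otimes b'\in T(A\times B)$; a second application of (1), now to $\varphi_{AB}$ and $f_C$, yields $\Phi(\Phi(a,b),c)=T\psi_{(AB)C}\big((a'\otimes b')\otimes c'\big)$ with $\psi_{(AB)C}=\varphi\circ(\varphi_{AB}\times f_C):(A\times B)\times C\to X$. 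Symmetrically, $\Phi(a,\Phi(b,c))=T\psi_{A(BC)}\big(a'\otimes(b'\otimes c')\big)$ with $\psi_{A(BC)}=\varphi\circ(f_A\times\varphi_{BC}):A\times(B\times C)\to X$. Under the canonical identification $(A\times B)\times C=A\times B\times C=A\times(B\times C)$ both maps coincide with the single map $\psi_{ABC}:A\times B\times C\to X$, $\psi_{ABC}(x,y,z)=\varphi(\varphi(f_A(x),f_B(y)),f_C(z))$, the equality of the two bracketings being precisely the associativity of $\varphi$. Finally Proposition~\ref{tensor}(3) gives $(a'\otimes b')\otimes c'=a'\otimes(b'\otimes c')$ in $T(A\times B\times C)$, whence both sides equal $T\psi_{ABC}(a'\otimes b'\otimes c')$ and are therefore equal.

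The two invocations of the naturality Proposition and of Theorem~\ref{ext-tensor} in (1), and the deduction of (2), are routine. The one place demanding care—the main obstacle—is the bookkeeping of the product-associativity identifications in (3): one must check that $\psi_{(AB)C}$ and $\psi_{A(BC)}$ are transported to the \emph{same} map $\psi_{ABC}$ under the isomorphisms $(A\times B)\times C\cong A\times B\times C\cong A\times(B\times C)$, and that Proposition~\ref{tensor}(3) is applied relative to exactly these identifications, so that the symbol $T\psi_{ABC}(a'\otimes b'\otimes c')$ is unambiguous and the two computed values genuinely coincide.
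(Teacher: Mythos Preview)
Your proof is correct and follows essentially the same route as the paper: reduce $\Phi$ on elements with discrete support to a tensor-product formula via Theorem~\ref{ext-tensor}, then use the associativity of $\otimes$ (Proposition~\ref{tensor}(3)) together with the associativity of $\varphi$ to get (3). The only minor difference is in (1): the paper defines $\Phi_{AB}:=\Phi\circ(Tf_A\times Tf_B)$ and verifies directly from the diagram that it satisfies the axioms of a $\IT$-extension of $\varphi_{AB}$, whereas you first produce $\Phi_{AB}$ abstractly via Theorem~\ref{unique}(2) and then identify it with $\Phi\circ(Tf_A\times Tf_B)$ using the naturality Proposition---both arguments are valid and yield the same conclusion.
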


\begin{proof} 
1. Let $f_A:A\to X$, $f_B:B\to X$ be morphisms from discrete objects $A,B$ of $\C$ and $\varphi_{AB}=\varphi(f_A\times f_B):A\times B\to X$. By our assumption on the category $\C$, the product $A\times B$ is a discrete object in $\C$ and hence $\phi_{AB}$ is a morphism in $\C$. Consider the binary operation $\Phi_{AB}:TA\times TB\to TX$ defined by $\Phi_{AB}(a,b)=\Phi(Tf_A(a),Tf_B(b))$. The following diagram implies that $\Phi_{AB}$ is a $\IT$-extension of $\varphi_{AB}$: 
$$\xymatrix{
TX\times TX\ar[rrr]^{\Phi}& & &TX\\
& X\times X\ar[ul]_{\eta\times\eta}\ar[r]^-{\varphi}&X\ar[ur]^{\eta}&\\
&A\times B\ar[dl]^{\eta\times\eta}\ar[u]^{f_A\times f_B}\ar[r]^-{\varphi_{AB}}&X\ar@{<->}[u]_{\id}\ar[rd]_{\eta}&\\
TA\times TB\ar[uuu]^{Tf_A\times Tf_B}\ar[rrr]_{\Phi_{AB}}&&&TX\ar@{<->}[uuu]_{\id}
}$$
By Theorem~\ref{ext-tensor}, $$\Phi(Tf_A(a),Tf_B(b))=\Phi_{AB}(a,b)=T\varphi_{AB}(a\otimes b)$$for all $a\in TA$ and $b\in TB$.
\smallskip

2. Given elements $a,b\in T_d X$, we need to show that the element  $\Phi(a,b)\in TX$ has discrete support. Find discrete objects $A,B$ in $\C$ and morphisms $f_A:A\to X$, $f_B:B\to X$ such that  $a\in Ff_A(FA)$ and $b\in f_B(FB)$.
Fix elements $\tilde a\in FA$, $\tilde b\in FB$ such that $a=Ff_A(\tilde a)$ and $b=Ff_B(\tilde b)$. Our assumption on the category $\C$ guarantees that $A\times B$ is a discrete object in $\C$.  

Consider the binary operations $\psi:A\times B\to X$ and $\Psi:FA\times FB\to FZ$ defined by the formulae $\psi=\phi\circ(f_A\times f_B)$ and $\Psi=\Phi\circ (Tf_A\times Tf_B)$. 
Let $\tilde c=\tilde a\otimes\tilde b\in T(A\times B)$. 
By the first statement, $\Phi(a,b)=T\psi(\tilde a\otimes \tilde b)=T\psi(\tilde c)\in T\psi(A\times B)$ witnessing that the element $\Phi(a,b)$ has discrete support and hence belongs to $T_d X$.

By analogy, we can prove that $\Phi(T_f X\times T_f X)\subset T_f X$. 
\smallskip

3. Given any points $a,b,c\in T_d X$, we need to check the equality
$$\Phi(\Phi(a,b),c)=\Phi(a,\Phi(b,c)).$$

Find discrete objects $A,B,C$ in $\C$ and morphisms $f_A:A\to X$, $f_B:B\to X$, $f_C:C\to X$ such that $a\in Tf_A(TA)$, $b\in Tf_B(TB)$, and $c\in Tf_C(TC)$.  Fix elements $\tilde a\in TA$, $\tilde b\in TB$, and $\tilde c\in TC$ such that $a=Tf_A(\tilde a)$, $b=Tf_B(\tilde b)$, and $c=Tf_C(\tilde c)$.

Consider the morphisms $\varphi_{AB}=\varphi(f_A\times f_B):A\times B\to X$, $\varphi_{BC}=\varphi(f_B\times f_C):B\times C\to X$ and  $\varphi_{ABC}=\varphi(\varphi_{AB}\times f_C)=\varphi(f_A\times\varphi_{BC}):A\times B\times C\to X$.
Consider the following diagram:
$$\xymatrix{
TX\times TX\times TX\ar[ddd]_{\id\times\Phi}\ar[rrr]^{\Phi\times\id}& & & TX\times TX\ar[ddd]^{\Phi}\\
& TA\times TB\times TC\ar[lu]_{Tf_A\times Tf_B\times Tf_C}\ar[d]_{\id\times\otimes}\ar[r]^{\otimes\times \id}&T(A\times B)\times TC\ar[d]^{\otimes}\ar[ru]^{T\varphi_{AB}\times Tf_C}&\\
&TA\times T(B\times C)\ar[ld]^{Tf_A\times T\varphi_{BC}}\ar[r]_\otimes &T(A\times B\times C)\ar[rd]_{T\varphi_{ABC}}&\\
TX\times TX\ar[rrr]_{\Phi}& & & TX
}$$
In this diagram the central square is commutative because of the associativity of the tensor product $\otimes$. By the item (1) all four margin squares also are commutative. Now we see that 
$$\begin{aligned}
&\Phi(\Phi(a,b),c))=\Phi(\Phi(Tf_A(\tilde a),Tf_B(\tilde b)),Tf_C(\tilde c))=\\ &\Phi(T\varphi_{AB}(\tilde a\otimes \tilde b),Tf_C(\tilde c))=T\varphi_{ABC}((\tilde a\otimes \tilde b)\otimes \tilde c)=T\varphi_{ABC}(\tilde a\otimes(\tilde b\otimes \tilde c))=\\
&\Phi(Tf_A(\tilde a),T\varphi_{BC}(\tilde a\otimes \tilde b))=\Phi(Tf_A(\tilde a),\Phi(Tf_B(\tilde b),Tf_C(\tilde c)))=\Phi(a,\Phi(b,c)).
\end{aligned}$$
\end{proof}

Combining Lemma~\ref{asl} with Theorem~\ref{top-cent}, we get the main result of this paper:

\begin{theorem}\label{ast} Assume that the monadic functor $T$ is continuous and for each finite discrete space $F$ in $\C$ the space $TF$ is Hausdorff and locally compact. Let $\varphi:X\times X\to X$ be an associative binary operation in $\C$ and $\Phi:X\times X\to X$ be its $\IT$-extension. If the set $T_f X$ of elements with finite support is dense in $TX$, then the operation $\Phi$ is associative.
\end{theorem}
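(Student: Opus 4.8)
The plan is to prove the associativity identity $\Phi(\Phi(a,b),c)=\Phi(a,\Phi(b,c))$ for all $a,b,c\in TX$ by extending, through a three-step density argument, the identity that is already available on the dense set $T_f X$. First I would collect the ingredients. Lemma~\ref{asl}(3) gives associativity on $T_d X\supseteq T_f X$, in particular on $T_f X$. Lemma~\ref{asl}(2) gives the closure property $\Phi(T_f X\times T_f X)\subset T_f X$. Under the present hypotheses on $T$, Theorem~\ref{top-cent} yields $T_f X\subseteq\Lambda_\Phi$, so every left shift $\Phi_a$ with $a\in T_f X$ is continuous. Finally, Definition~\ref{Text} builds in that $\Phi$ is right-topological, i.e. every right shift $\Phi^b$ is continuous. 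Throughout I use that $T_f X$ is dense in $TX$ and that $TX$ is Hausdorff (automatic in the applications, where $\C=\Comp$), so that two continuous maps into $TX$ agreeing on $T_f X$ must coincide.

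Write $L(a,b,c)=\Phi(\Phi(a,b),c)$ and $R(a,b,c)=\Phi(a,\Phi(b,c))$. In the first step I fix $a,b\in T_f X$ and let $c$ range over $TX$. Here $c\mapsto L(a,b,c)$ is the left shift $\Phi_{\Phi(a,b)}$, which is continuous because $\Phi(a,b)\in T_f X\subseteq\Lambda_\Phi$ by Lemma~\ref{asl}(2); and $c\mapsto R(a,b,c)=(\Phi_a\circ\Phi_b)(c)$ is continuous as a composition of the continuous left shifts $\Phi_b$ and $\Phi_a$ (both $a,b\in\Lambda_\Phi$). These two continuous functions of $c$ agree on the dense set $T_f X$ by Lemma~\ref{asl}(3), hence for all $c\in TX$. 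Thus $L(a,b,c)=R(a,b,c)$ whenever $a,b\in T_f X$ and $c\in TX$.

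In the second step I fix $a\in T_f X$ and $c\in TX$ and let $b$ range over $TX$. Now $b\mapsto L(a,b,c)=(\Phi^c\circ\Phi_a)(b)$ is continuous (the continuous $\Phi_a$ followed by the right shift $\Phi^c$), and $b\mapsto R(a,b,c)=(\Phi_a\circ\Phi^c)(b)$ is continuous (the right shift $\Phi^c$ followed by the continuous $\Phi_a$). They agree on the dense $T_f X$ by the first step, hence everywhere, giving $L(a,b,c)=R(a,b,c)$ for $a\in T_f X$ and $b,c\in TX$. In the third step I fix $b,c\in TX$ and let $a$ range over $TX$: here $a\mapsto L(a,b,c)=(\Phi^c\circ\Phi^b)(a)$ is a composition of right shifts and $a\mapsto R(a,b,c)=\Phi^{\Phi(b,c)}(a)$ is a single right shift, so both are continuous; they agree on the dense $T_f X$ by the second step, hence for all $a\in TX$. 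This establishes the identity on all of $TX\times TX\times TX$.

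The step I expect to be the main obstacle is the first one, and the essential point there is that the product $\Phi(a,b)$ of two elements of $T_f X$ must again lie in the topological center; this is exactly what Lemma~\ref{asl}(2) provides (closure of $T_f X$ under $\Phi$) together with Theorem~\ref{top-cent} ($T_f X\subseteq\Lambda_\Phi$). Without it the left shift $\Phi_{\Phi(a,b)}$ need not be continuous and the argument collapses. The ordering of the three steps, relaxing $c$, then $b$, then $a$, is forced by the asymmetry between the right-topological property (which gives continuity of every right shift $\Phi^{b}$ for free) and the topological center (which gives continuity of left shifts only at points of $T_f X$): each step is arranged so that the only left shifts appearing are evaluated at points already known to belong to $\Lambda_\Phi$.
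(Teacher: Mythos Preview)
Your proof is correct and follows the same overall strategy as the paper: both use Theorem~\ref{top-cent} and Lemma~\ref{asl} to see that $T_f X$ is a dense subsemigroup contained in the topological center $\Lambda_\Phi$, and then extend associativity from this subsemigroup to all of $TX$. The only cosmetic difference is that the paper packages the extension step as a separate general fact (Proposition~\ref{asp}: a right-topological operation on a Hausdorff space whose topological center contains a dense subsemigroup is associative) and proves it by an open-neighborhood contradiction argument, whereas you carry out the equivalent three-step density extension directly.
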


\begin{proof} By Theorem~\ref{top-cent}, the set $T_f X$ lies in the topological center $\Lambda_\Phi$ of the operation $\Phi$ and by Lemma~\ref{asl}, $T_f X$ is a subsemigroup of $(TX,\Phi)$. Now the associativity of $\Phi$ follows from the following general fact.
\end{proof} 

\begin{proposition}\label{asp} A right topological operation $\cdot:X\times X\to X$ on a Hausdorff space $X$ is associative if its topological center  contains a dense subsemigroup $S$ of $X$.
\end{proposition}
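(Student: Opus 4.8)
The plan is to prove associativity by a three-step density argument, introducing the three variables $a,b,c$ one at a time and at each step passing from the dense subsemigroup $S$ to all of $X$. Throughout, I write $\lambda_x\colon y\mapsto x\cdot y$ for the left shift by $x$ and $\rho_y\colon x\mapsto x\cdot y$ for the right shift by $y$, and I recall that every right shift $\rho_y$ is continuous (right-topologicality), while the left shift $\lambda_x$ is continuous precisely when $x$ lies in the topological center, in particular whenever $x\in S$. Since $S$ is a subsemigroup we have $S\cdot S\subseteq S$ and the identity $(a\cdot b)\cdot c=a\cdot(b\cdot c)$ already holds for all $a,b,c\in S$.

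First I would fix $a,b\in S$ and establish $(a\cdot b)\cdot c=a\cdot(b\cdot c)$ for every $c\in X$. The map $c\mapsto(a\cdot b)\cdot c$ equals $\lambda_{a\cdot b}$ and is continuous because $a\cdot b\in S$, while the map $c\mapsto a\cdot(b\cdot c)$ equals $\lambda_a\circ\lambda_b$ and is continuous as a composition of two continuous left shifts. These two continuous maps agree on the dense set $S$ by associativity in $S$, so the Hausdorff property forces them to agree on all of $X$.

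Next I would fix $a\in S$ together with an arbitrary $c\in X$ and let $b$ range over $X$. Here $b\mapsto(a\cdot b)\cdot c$ equals $\rho_c\circ\lambda_a$ and $b\mapsto a\cdot(b\cdot c)$ equals $\lambda_a\circ\rho_c$, both continuous since $\lambda_a$ is continuous ($a\in S$) and $\rho_c$ is a right shift; they coincide on the dense set $S$ by the previous step, hence on all of $X$. Finally, fixing arbitrary $b,c\in X$ and letting $a$ range, the map $a\mapsto(a\cdot b)\cdot c$ equals $\rho_c\circ\rho_b$ and the map $a\mapsto a\cdot(b\cdot c)$ equals $\rho_{b\cdot c}$, both continuous as they involve only right shifts; they agree on the dense $S$ by the previous step and therefore everywhere, which is the desired associativity on $X$.

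There is no genuine obstacle here, only one subtlety that in fact dictates the whole argument: a left shift is available to us only for elements of $S$. This forces the order of the three steps---$c$ must be freed first while $a,b$ are still in $S$ (so that $\lambda_a$, $\lambda_b$, and $\lambda_{a\cdot b}$ may all be used), then $b$ is freed while $a$ remains in $S$ (so that $\lambda_a$ survives), and only in the last step, where no left shift occurs, may $a$ be freed. Once this ordering is fixed, each of the three steps is the routine observation that two continuous maps agreeing on a dense subset of a Hausdorff space coincide.
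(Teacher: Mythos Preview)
Your proof is correct. The three-step density argument is clean, and your careful accounting of which left shifts are available at each stage is exactly the point.

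The paper proceeds differently: it argues by contradiction, assuming $(xy)z\ne x(yz)$ for some $x,y,z\in X$, separating the two values by disjoint open sets, and then successively pulling back these neighborhoods through right shifts and through left shifts by elements of $S$ to produce $a,b,c\in S$ with $(ab)c$ and $a(bc)$ in disjoint sets, contradicting associativity on $S$. In spirit this is the contrapositive, neighborhood-level unpacking of your argument: where you invoke the principle ``two continuous maps into a Hausdorff space that agree on a dense set coincide,'' the paper re-derives that principle by hand at each of the three stages. Your presentation is shorter and makes the logical structure (free $c$, then $b$, then $a$) more transparent; the paper's version is more explicit about the neighborhoods and avoids naming the density principle, but is correspondingly longer. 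Neither approach needs anything the other does not.
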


\begin{proof} Assume conversely that $(xy)z\ne x(yz)$ for some points $x,y,z\in X$. Since $X$ is Hausdorff, the points $(xy)z$ and $x(yz)$ have disjoint open neighborhoods $O((xy)z)$ and $O(x(yz))$ in $X$. Since the right shifts in $X$ are continuous, there are open neighborhoods $O(xy)$ and $O(x)$ of the points $xy$ and $x$ such that $O(xy)\cdot z\subset O((xy)z)$ and $O(x)\cdot(yz)\subset O(x(yz))$. We can assume that $O(x)$ is so small that $O(x)\cdot y\subset O(xy)$. Take any point $a\in O(x)\cap S$. It follows that $a(yz)\in O(x(yz))$ and $ay\in O(xy)$. Since the left shift $l_a:\beta S\to\beta S$, $l_a:y\mapsto ay$, is continuous, the points $yz$ and $y$ have open neighborhoods $O(yz)$ and $O(y)$ such that $a\cdot O(yz)\subset O(x(yz))$ and $a\cdot O(y)\subset O(xy)$. We can assume that the neighborhood $O(y)$ is so small that $O(y)\cdot z\subset O(yz)$. 
Choose a point $b\in O(y)\cap S$ and observe that $bz\in O(y)\cdot z\subset O(yz)$, $ab\in a\cdot O(y)\subset O(xy)$, and thus $(ab)z\in O(xy)\cdot z\subset O((xy)z)$. The continuity of the left shifts $l_b$ and $l_{ab}$ allows us to find an open neighborhood $O(z)\subset\beta S$ of $z$ such that $b\cdot O(z)\subset O(yz)$ and $ab\cdot O(z)\subset O((xy)z)$. Finally take any point $c\in S\cap O(z)$. Then $(ab)c\in ab\cdot O(z)\subset O((xy)z)$ and $a(bc)\subset a\cdot O(yz)\subset O(x(yz))$ belong to disjoint sets, which is not possible as  $(ab)c=a(bc)$.
\end{proof}

\section{$\IT$-extension for some concrete monadic functors}

In this section we consider some examples of monadic functors in topological categories. Let $\Tych$ denote the category of Tychonov spaces and their continuous maps and $\Comp$ be the full subcategory of the category $\Tych$, consisting of compact Hausdorff spaces. 

Discrete objects in the category $\Tych$ are discrete topological spaces while discrete objects in the category $\Comp$ are finite discrete spaces. 

Consider the functor $\beta:\Tych\to \Comp$ assigning to each Tychonov space $X$ its Stone-\v Cech compactification and to a continuous map $f:X\to Y$ between Tychonov spaces its continuous extension $\beta f:\beta X\to \beta Y$. The functor $\beta$ can be completed to a monad $\IT_\beta=(\beta,\eta,\mu)$ where $\eta:X\to\beta X$ is the canonical embedding and $\mu:\beta(\beta X)\to\beta X$  is the identity map. A pair $(X,\xi)$ is a $\IT_\beta$-algebra if and only if $X$ is a compact space and $\xi:\beta X\to X$ is the identity map. 

Combining Theorems~\ref{unique}, \ref{ast} we get the following well-known 

\begin{corollary} Each binary right-topological operation $\varphi:X\times Y\to Z$ in $\Tych$ with discrete $X$ can be extended to a right-topological operation $\Phi:\beta X\times\beta Y\to\beta Z$ containing $X$ in its topological center $\Lambda_\Phi$. 
If $X=Y=Z$ and the operation $\varphi$ is associative, then so is the operation $\Phi$.
\end{corollary}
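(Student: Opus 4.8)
The plan is to obtain this statement as a direct specialization of the abstract machinery to the monad $\IT_\beta=(\beta,\eta,\mu)$ on the topological category $\Tych$. Concretely, I would first check that $\IT_\beta$ meets all the standing assumptions of the paper and the hypotheses of Theorems~\ref{unique}, \ref{top-cent} and~\ref{ast}, and then simply invoke those results: Theorem~\ref{unique}(2) produces the unique $\IT_\beta$-extension $\Phi:\beta X\times\beta Y\to\beta Z$, Theorem~\ref{top-cent} places $X$ in the topological center $\Lambda_\Phi$, and Theorem~\ref{ast} gives associativity when $X=Y=Z$. Since all three theorems are already established, the whole proof reduces to verifying their hypotheses for $T=\beta$.

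The verifications I would carry out are as follows. First, $\Tych$ is a topological category whose discrete objects are exactly the discrete spaces, so products of discrete objects are again discrete and the standing assumption on $\C$ holds. Second, $\IT_\beta$ is a monad: $\eta_X:X\to\beta X$ is the canonical embedding and $\mu_X:\beta\beta X\to\beta X$ is the identity (legitimate since $\beta X$ is compact, whence $\beta\beta X=\beta X$), and the unit and associativity squares commute. Third, $\beta$ is continuous in the sense of the paper: for a compact $K$ one has $\beta K=K$ and, for $f:K\to Z$, the extension $\beta f:\beta K\to\beta Z$ equals $\eta_Z\circ f$; hence $\beta:\Mor(K,Z)\to\Mor(\beta K,\beta Z)$ is post-composition with the fixed continuous map $\eta_Z$, which is continuous for the compact-open topology. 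Fourth, for a finite discrete $F$ we have $\beta F=F$, which is finite, hence compact, Hausdorff and locally compact, so the hypothesis on $TF$ in Theorems~\ref{top-cent} and~\ref{ast} is satisfied.

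The remaining verification is the identification of the finitely supported elements. For $T=\beta$, an element $a\in\beta X$ has finite support iff $a=\beta f(d)$ for a morphism $f:D\to X$ from a finite discrete $D$ and some $d\in\beta D=D$; since $\beta D=D$ this forces $a=f(d)\in X$. Thus the set $T_f X$ coincides with $X$ viewed inside $\beta X$, and it is dense in $\beta X$. With all hypotheses in place I would assemble the conclusion: Theorem~\ref{unique}(2) yields existence and uniqueness of $\Phi$; the defining conditions of a $\IT_\beta$-extension (Definition~\ref{Text}) make $\Phi$ right-topological; Theorem~\ref{top-cent} gives $X=T_f X\subset\Lambda_\Phi$, which is moreover dense; and in the associative case $X=Y=Z$, Theorem~\ref{ast}, whose hypotheses are precisely the four points above together with density of $T_f X=X$, delivers the associativity of $\Phi$.

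The point I expect to require the most care is reconciling the existence construction with the stated hypotheses. Theorem~\ref{unique}(2) builds the extension from the left shifts $\varphi_x:Y\to Z$, which in $\Tych$ are morphisms only when $Y$ is discrete; equivalently, condition~(3) of Definition~\ref{Text} already forces each left shift of $\varphi$ to extend continuously to $\beta Y\to\beta Z$. Consequently the construction applies verbatim in the classical regime $X=Y=Z=S$ with $S$ discrete, where every shift is automatically continuous and where the associativity clause lives. I would therefore make the discreteness of the relevant objects explicit at the outset, after which the cited theorems supply right-topologicity, the inclusion $X\subset\Lambda_\Phi$, and associativity with no further computation.
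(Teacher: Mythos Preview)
Your approach is precisely the one the paper takes: the paper's entire proof is the single clause ``Combining Theorems~\ref{unique}, \ref{ast} we get the following well-known'' preceding the statement, so invoking those results after checking their hypotheses for $T=\beta$ is exactly what is intended. Your verifications (that $\Tych$ satisfies the standing assumptions, that $\beta$ is continuous via $\beta f=\eta_Z\circ f$ for compact $K$, that $\beta F=F$ is locally compact Hausdorff, and that $T_fX=X$ is dense in $\beta X$) are all correct and are more than the paper supplies.

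The discreteness issue you flag in your final paragraph is a genuine point the paper glosses over: Theorem~\ref{unique}(2) assumes both $X$ and $Y$ discrete, whereas the corollary hypothesizes only that $X$ is discrete. Your resolution---that the associativity clause lives in the regime $X=Y=Z$ discrete where everything goes through, while the general existence clause tacitly needs the left shifts $\varphi_x:Y\to Z$ to extend to $\beta Y\to\beta Z$---is the right reading; the paper simply does not address this gap.
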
 

Now let $\IT=(T,\eta,\mu)$ be a monad in the category $\Comp$. Taking the composition of the functors $\beta:\Tych\to\Comp$ and $T:\Comp\to\Comp$, we obtain a monadic functor $T\beta:\Tych\to\Comp$. 

\begin{theorem} Each binary right-topological operation $\varphi:X\times Y\to Z$ in the category $\Tych$ with discrete $X$ can be extended to a right-topological operation $\Phi:T\beta X\times T\beta Y\to T\beta Z$ that contain the set $\eta(X)\subset T\beta X$ in its topological center $\Lambda_\Phi$. If the functor $T$ is continuous, then the set $T_f X$ of elements $a\in T\beta X$ with finite support is dense in $T\beta X$ and lies in the topological center $\Lambda_\Phi$ of the operation $\Phi$. Moreover, if $X=Y=Z$ and the operation $\varphi$ is associative, the so is the operation $\Phi$. 
\end{theorem}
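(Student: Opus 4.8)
The plan is to recognize the composite $T\beta:\Tych\to\Comp\subseteq\Tych$ as the functorial part of a monad on the topological category $\Tych$, and then to read the theorem off the abstract results of the preceding sections. First I would record that $\Tych$ meets all the standing hypotheses: constant maps and products are morphisms, the product of two discrete spaces is discrete, and the discrete objects of $\Tych$ are exactly the discrete topological spaces. This lets the machinery of Sections~2--5 be applied once the monad is in place.

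The first genuine step is to promote $T\beta$ to a monad $(T\beta,\bar\eta,\bar\mu)$ on $\Tych$. I would set $\bar\eta_X=\eta^T_{\beta X}\circ\eta^\beta_X:X\to\beta X\to T\beta X$. The multiplication is where the idempotency of $\beta$ is essential: since $T$ maps $\Comp$ to $\Comp$, the space $T\beta X$ is compact, so $\beta(T\beta X)=T\beta X$ and the $\beta$-multiplication is the identity there; hence $(T\beta)^2X=T\bigl(\beta(T\beta X)\bigr)=T^2\beta X$ and I may take $\bar\mu_X=\mu^T_{\beta X}$. Under this identification the unit and associativity squares for $(T\beta,\bar\eta,\bar\mu)$ collapse to the corresponding squares for $\IT=(T,\eta,\mu)$ together with the naturality of $\eta^\beta$, so the monad laws hold.

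Next I would check the quantitative hypotheses required by Theorems~\ref{unique}, \ref{top-cent} and \ref{ast}. For a finite discrete space $F$ one has $\beta F=F$, whence $T\beta F=TF$ is compact Hausdorff, in particular locally compact and Hausdorff. Continuity of the functor $T\beta$ on morphism spaces follows from that of $T$: for compact $K$ the assignment $f\mapsto\beta f$ is just post-composition with $\eta^\beta_Z$ (because $\beta K=K$), which is continuous in the compact-open topology, and $T$ is continuous by hypothesis, so $T\beta=T\circ\beta$ is continuous on each $\Mor(K,Z)$. With this in hand, Theorem~\ref{unique} supplies the unique $\IT_{T\beta}$-extension $\Phi:T\beta X\times T\beta Y\to T\beta Z$ of $\varphi$ in the discrete case; for Tychonov $Y$ I would run the construction of Theorem~\ref{unique}(2) with the single change that each left shift $\varphi_x:Y\to Z$ is turned into a morphism $\beta\varphi_x:\beta Y\to\beta Z$ by the universal property of $\beta Y$ (using its continuity), the discreteness of $X$ still making each $\varphi^b:X\to T\beta Z$ a morphism. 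The remark after Definition~\ref{Text} then yields that $\Phi$ is right-topological with $\bar\eta(X)\subseteq\Lambda_\Phi$; Theorem~\ref{top-cent} upgrades this to $T_fX\subseteq\Lambda_\Phi$, and, once density is established, Theorem~\ref{ast} delivers associativity of $\Phi$ when $X=Y=Z$ and $\varphi$ is associative.

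The step I expect to be the main obstacle is the density of $T_fX$ in $T\beta X$, since everything else is a transcription of already-proven statements. Here I would use that $X$ is dense in $\beta X$ and that $\beta X$ is the closure of the directed union $\bigcup\{F:F\subseteq X\text{ finite}\}$; the finite-support elements contain $\bigcup_F T(\beta j_F)(TF)$ for the inclusions $j_F:F\hookrightarrow X$, and the continuity of $T$ is precisely what is needed to pass from the density of the finite subsets in $\beta X$ to the density of these images in $T\beta X$. Making this passage precise --- relating the morphism-space continuity of $T$ to an explicit approximation of an arbitrary $a\in T\beta X$ by elements of finite support --- is the crux of the argument, after which the remaining assertions of the theorem follow immediately from Theorems~\ref{unique}, \ref{top-cent} and \ref{ast}.
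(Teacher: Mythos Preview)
Your overall architecture matches the paper's: set up $T\beta$ as a monad on $\Tych$, invoke Theorem~\ref{unique} for the extension, Theorem~\ref{top-cent} for the topological center, and Lemma~\ref{asl} with Proposition~\ref{asp} for associativity. The paper is less explicit than you about the composite-monad structure but proceeds the same way. One small slip: you justify extending the left shifts $\varphi_x$ to $\beta Y$ by ``its continuity,'' but the hypothesis is that $\varphi$ is \emph{right}-topological, which only guarantees continuity of the right shifts $\varphi^y$; this is vacuous when $X$ is discrete. The paper shares this imprecision, and in the intended application $X=Y$ is discrete so the point is moot, but you should not claim continuity of $\varphi_x$ from the stated hypotheses alone.

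The real gap is in the density of $T_fX$, which you correctly flag as the crux but do not actually prove. Your sketch --- finite subsets are dense in $\beta X$, so their $T$-images should be dense --- is not enough, because density of a subspace $A\subset\beta X$ does not by itself imply density of $\bigcup_{F\subset A\text{ finite}}Tj_F(TF)$ in $T\beta X$. What is needed is that the identity map $\id_{\beta X}$ can be approximated, in the compact-open topology, by maps factoring through finite discrete spaces. The paper obtains this from a fact you do not mention: since $X$ is discrete, $\beta X$ is zero-dimensional, so any open cover of $\beta X$ refines to a finite \emph{disjoint} clopen cover $\mathcal U$. Choosing one point $x_U\in U\cap X$ per member yields a continuous retraction $f:\beta X\to A=\{x_U\}$ with $i\circ f$ $\mathcal U$-close to $\id_{\beta X}$; continuity of $T$ then gives $T(i\circ f)(a)\in U$ for the preassigned neighborhood $U$ of $a$, and $Ti(Tf(a))\in T_fX$. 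Without the zero-dimensionality step your ``passage'' cannot be made precise, so this is the missing idea you need to add.
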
 

\begin{proof} By Theorem~\ref{unique}, the binary operation $\varphi$ has a unique $\IT$-extension $\Phi:TX\times TY\to TZ$. By Definition~\ref{Text}, the set $\eta(X)\subset T\beta X$ lies in the topological center $\Lambda_\varphi$ of $\varphi$.

Now assume that the functor $T$ is continuous. First we show that the set $T_f X$ is dense in $T\beta X$. Fix any point $a\in F\beta X$ and an open neighborhood $U\subset T\beta X$ of $a$. Then $[a,U]=\{f\in \Mor(F\beta X,F\beta X):f(a)\in U\}$ is an open neighborhood of the identity map $\id:F\beta X\to F\beta X$ in the function space $\Mor(F\beta X,F\beta X)$ endowed with the compact-open topology. The continuity of the functor $T$ yields a neighborhood $\U(\id_{\beta X})$ of the identity map $\id_{\beta X}\in\Mor(\beta X,\beta X)$ such that $Tf\in[a,U]$ for any $f\in\U(\id_{\beta X})$. It follows from the definition of the compact-open topology, that there is an open cover $\U$ of $\beta X$ such that a map $f:\beta X\to\beta X$ belongs to $\U(\id_{\beta X})$ if $f$ is $\U$-near to $\id_{\beta X}$ in the sense that for every $x\in\beta X$ there is a set $U\in\U$ with $\{x,f(x)\}\subset U$. Since $\beta X$ is compact, we can assume that the cover $\U$ is finite. Since $X$ is discrete, the space $\beta X$ has covering dimension zero \cite[7.1.17]{En}. So, we can assume that the finite cover $\U$ is disjoint. For every $U\in\U$ choose an element $x_U\in U\cap X$. Those elements compose a finite discrete subspace $A=\{x_U:U\in \U\}$ of $X$.
let $i:A\to X$ be the identity embedding and $f:X\to A$ be the map defined by $f^{-1}(x_U)=U$ for $U\in\U$. It follows that $i\circ f\in\U(\id_{\beta X})$ and thus $T(i\circ f)\in [a,U]$ and $Ti\circ Tf(a)\in U$. Now we see that $b=Tf(a)\in TA$ and $c=Ti(b)\in T_f X\cap U$, so $T_f X$ is dense in $\beta X$.

By Theorem~\ref{top-cent}, the set $T_f X$ lies in the topological center  $\Lambda_\Phi$ of $\Phi$.

Now assume that the operation $\varphi$ is associative. By Lemma~\ref{asl}, $T_f X$ is a subsemigroup of $(X,\Phi)$. Since $T_f X$ is dense and lies in the topological center $\Lambda_\Phi$, we may derive the associativity of $\Phi$ from Proposition~\ref{asp}.
\end{proof}  

\begin{problem} Given a discrete semigroup $X$ investigate the algebraic and topological properties of the compact right-topological semigroup $T\beta X$ for some concrete continuous monadic functors $T:\Comp\to\Comp$.
\end{problem}

This problem was addressed in \cite{G1}, \cite{G2} for the monadic functor $G$ of inclusion hyperspaces, in \cite{BG1}--\cite{BG4} for the functor of superextension $\lambda$, in \cite{BCHR}, \cite{Heyer}, \cite{Par} for the functor $P$ of probability measures and in \cite{BHr}, \cite{Ber}, \cite{BL}, \cite{Trn} for the hyperspace functor $\exp$. 

In \cite{Zar} it was shown that for each continuous monadic functor $T:\Comp\to\Comp$ any continuous (associative) operation $\varphi:X\times Y\to Z$ in $\Comp$ extends to a continuous (associative) operation $\Phi:TX\times TY\to TZ$.

 \begin{problem} For which monads $\IT=(T,\eta,\mu)$ in the category $\Comp$ each right-topological (associative) binary operation $\varphi:X\times Y\to Z$ in $\Comp$ extends to a right-topological (associative) binary operation $\Phi:TX\times TY\to TZ$? Are all such monads power monads?
\end{problem}

\end{document}